\newtheorem{theorem}{Theorem}[section]
\newtheorem{conj}{Conjecture}[section]
\newtheorem{lemma}{Lemma}[section]
\theoremstyle{definition}
\theoremstyle{remark}
\numberwithin{equation}{section}
\begin{document}

\title[Explicit Formulas for Partition Pairs and Triples with 3-Cores]
 {Explicit Formulas for Partition Pairs and \\ Triples with 3-Cores}

\author{LIUQUAN WANG}
\address{Department of Mathematics, National University of Singapore, Singapore, 119076, SINGAPORE}

\email{wangliuquan@u.nus.edu; mathlqwang@163.com}

\subjclass[2010]{Primary 11P83; Secondary 05A17}

\keywords{Partitions; $3$-cores; Ramanujan's ${}_{1}\psi_{1}$ summation; Bailey's ${}_{6}\psi_{6}$ formula.}

\date{July 2, 2015}
\dedicatory{}

\maketitle

\begin{abstract}
Let $A_{3}(n)$ (resp.\ ${{B}_{3}}(n)$) denote the number of partition pairs (resp.\ triples) of $n$ where each partition is 3-core. By applying Ramanujan's ${}_{1}\psi_{1}$ formula and Bailey's ${}_{6}\psi_{6}$ formula, we find the explicit formulas for $A_{3}(n)$ and $B_{3}(n)$. Using these formulas, we confirm a conjecture of Xia and establish many arithmetic identities satisfied by $A_{3}(n)$ and $B_{3}(n)$.
\end{abstract}

\section{Introduction}	
A partition of a positive integer $n$ is any nonincreasing sequence of positive integers whose sum is $n$. For example, $6=3+2+1$ and $\lambda =\{3,2,1\}$ is a partition of 6. A partition $\lambda $ of $n$ is said to be a $t$-core if it has no hook numbers that are multiples of $t$. We denote the number of $t$-core partitions of $n$ by ${{a}_{t}}(n)$.

The generating function of ${{a}_{t}}(n)$ is given by (see \cite[Eq.\ (2.1)]{Garvan1990})
\begin{equation}\label{atgen}
\sum\limits_{n=0}^{\infty }{{{a}_{t}}(n){{q}^{n}}}=\frac{(q^t;q^t)_{\infty}^{t}}{(q;q)_{\infty}},
\end{equation}
here and throughout this paper, we use the following notation
\[{{(a;q)}_{\infty }}:=\prod\limits_{n=0}^{\infty }{(1-a{{q}^{n}})}, \quad (a;q)_{n}:=\frac{(a;q)_{\infty}}{(aq^n;q)_{\infty}} \quad(-\infty <n <\infty).\]
For convenience, we also introduce the brief notation
\[(a_1,a_2, \cdots, a_n;q)_{\infty}:=(a_1;q)_{\infty}(a_2;q)_{\infty}\cdots (a_n;q)_{\infty}.\]

A partition $k$-tuple $({{\lambda }_{1}},{{\lambda }_{2}},\cdots ,{{\lambda }_{k}})$ of $n$ is a $k$-tuple of partitions ${{\lambda }_{1}},{{\lambda }_{2}},\cdots ,{{\lambda }_{k}}$ such that the sum of all the parts equals $n$. For example, let ${{\lambda }_{1}}=\{2,1\},{{\lambda }_{2}}=\{1,1\},{{\lambda }_{3}}=\{1\}$. Then $(\lambda_1, \lambda_2)$ is a partition pair of 5 since $2+1+1+1=5$, and $({{\lambda }_{1}},{{\lambda }_{2}},{{\lambda }_{3}})$ is a partition triple of $6$ since $2+1+1+1+1=6$. A partition $k$-tuple of $n$ with $t$-cores is a partition $k$-tuple  $({{\lambda }_{1}},{{\lambda }_{2}},\cdots ,{{\lambda }_{k}})$ of $n$ where each $\lambda _{i}$ is $t$-core for $i=1,2, \cdots ,k$.

Let ${{A}_{t}}(n)$ (resp.\ ${{B}_{t}}(n)$) denote the number of partition pairs (resp.\ triples) of $n$ with $t$-cores. From (\ref{atgen}) we know the generating functions for ${{A}_{t}}(n)$ and ${{B}_{t}}(n)$ are
\begin{equation}\label{At}
\sum\limits_{n=0}^{\infty }{{{A}_{t}}(n){{q}^{n}}}=\frac{(q^t;q^t)_{\infty}^{2t}}{(q;q)_{\infty}^{2}}
\end{equation}
and
\begin{equation}\label{Bt}
\sum\limits_{n=0}^{\infty }{{{B}_{t}}(n){{q}^{n}}}=\frac{(q^t;q^t)_{\infty}^{3t}}{(q;q)_{\infty}^{3}}
\end{equation}
respectively.

In this paper, we focus on partition $k$-tuples with 3-cores for $1\le k \le 3$. The properties of $a_{3}(n)$, $A_{3}(n)$ and $B_{3}(n)$ have drawn much attention in the past years. In 1996, using the tools of modular forms, Granville and Ono \cite{Ono} first discovered the following formula for $a_{3}(n)$:
\begin{equation}\label{a3n}
{{a}_{3}}(n)={{d}_{1,3}}(3n+1)-{{d}_{2,3}}(3n+1),
\end{equation}
where ${{d}_{r,3}}(n)$ denote the number of positive divisors of $n$ congruent to $r$ modulo 3.

In 2009, by using some known identities, Hirschhorn and Sellers \cite{Hirschhorn} provided an elementary proof of (\ref{a3n}). Moreover, let
\[3n+1= \prod\limits_{p_i\equiv 1 \, \textrm{(mod 3)}}{p_{i}^{\alpha_i}} \cdot \prod\limits_{q_j\equiv 2 \, \textrm{(mod 3)}}{q_{j}^{\beta_j}}\]
with each $\alpha_i, \beta_j \ge 0$ be the prime factorization of $3n+1$, they gave the explicit formula:
\begin{displaymath}
a_{3}(n)=\left\{\begin{array}{ll}
\prod(\alpha_i+1) & \textrm{if all $\beta_j$ are even;}\\
0 &\textrm{otherwise.}
\end{array} \right.
\end{displaymath}
Some arithmetic identities were then obtained as corollaries. For example, let $p\equiv 2$ (mod 3) be a prime and let $k$ be a positive even integer. Then, for all $n\ge 0$,
\[{{a}_{3}}\Big({{p}^{k}}n+\frac{{{p}^{k}}-1}{3}\Big)={{a}_{3}}(n).\]

In 2014, Lin \cite{Lin} found some arithmetic relations about ${{A}_{3}}(n)$ such as ${{A}_{3}}(8n+6)=7{{A}_{3}}(2n+1)$. By using some theta function identities, Baruah and Nath \cite{BaruahJNT} established three infinite families of arithmetic identities involving ${{A}_{3}}(n)$. For any integer $k\ge 1$, they proved that
\begin{equation}\label{BNid}
\begin{split}
A_{3}\bigg(2^{2k+2}n+\frac{2(2^{2k}-1)}{3}\bigg)&=\frac{2^{2k+2}-1}{3}A_{3}(4n), \\
{{A}_{3}}\bigg({{2}^{2k+2}}n+\frac{2({{2}^{2k+2}}-1)}{3}\bigg)&=\frac{{{2}^{2k+2}}-1}{3}\cdot{{A}_{3}}(4n+2)-\frac{{{2}^{2k+2}}-4}{3}\cdot{{A}_{3}}(n), \\
A_{3}\bigg(2^{2k+1}n+\frac{5\cdot 2^{2k}-2}{3}\bigg)&=\big(2^{2k+1}-1\big)A_{3}(2n+1).
\end{split}
\end{equation}

Xia \cite{XiaRama} found several infinite families of congruences modulo 4, 8 for $A_{3}(n)$. For example, he showed that for all integers $n \ge 0$,
\[A_{3}(8n+4)\equiv 0 \pmod{4}, \quad A_{3}(16n+4)\equiv 0 \pmod{8}.\]
He also proposed the following conjecture
\begin{conj}\label{Xiaconj}
For any positive integer $j$ and prime $p$, there exists a positive integer $k_{0}$ such that for all $n\ge 0$ and $\alpha \ge 0$,
\[A_{3}\Big(4^{k_{0}(\alpha+1)}n+\frac{2^{2k_{0}(\alpha+1)-1}-2}{3}\Big) \equiv 0 \pmod{p^{j}}.\]
\end{conj}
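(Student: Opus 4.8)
\medskip
\noindent\textbf{Proof proposal.}\quad
The plan is to first establish, via Ramanujan's ${}_{1}\psi_{1}$ summation, the closed form
\[
A_{3}(n)=\tfrac{1}{3}\,\sigma(3n+2),\qquad \sigma(N):=\sum_{d\mid N}d ,
\]
from which Conjecture~\ref{Xiaconj} will follow by elementary divisor arithmetic. (That the right‑hand side is an integer is automatic: $3n+2\equiv2\pmod3$, so $3n+2$ is divisible by some prime $p\equiv2\pmod3$ to an odd power, and then $3\mid1+p\mid\sigma(3n+2)$.) One could instead run the whole argument from the first identity of \eqref{BNid}, which is already proved above and is equivalent to this after noting $A_{3}(4n)=\sigma(6n+1)$; I will use the closed form for transparency.

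Next I would fix a prime $p$ and an integer $j\ge1$, let $k_{0}\ge1$ be chosen later, and abbreviate $K:=k_{0}(\alpha+1)$ for $\alpha\ge0$. Since
\[
3\!\left(4^{K}n+\tfrac{2^{2K-1}-2}{3}\right)+2=3\cdot2^{2K}n+2^{2K-1}=2^{2K-1}(6n+1)
\]
with $6n+1$ odd, multiplicativity of $\sigma$ gives
\[
A_{3}\!\left(4^{K}n+\tfrac{2^{2K-1}-2}{3}\right)=\tfrac{1}{3}\,\sigma\!\left(2^{2K-1}\right)\sigma(6n+1)=\tfrac{2^{2K}-1}{3}\,\sigma(6n+1).
\]
As $\sigma(6n+1)\in\mathbb Z$, it is enough to choose $k_{0}$ so that $p^{j}\mid\tfrac{2^{2K}-1}{3}$ for all $\alpha\ge0$, i.e.\ for every positive multiple $K$ of $k_{0}$; and since $2^{2k_{0}}-1$ divides $2^{2K}-1$ while $3\mid2^{2k_{0}}-1$, it suffices to secure
\[
p^{j}\ \Big|\ \tfrac{2^{2k_{0}}-1}{3}.
\]

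If $p\neq3$, this is equivalent to $p^{j}\mid2^{2k_{0}}-1$, i.e.\ to $\operatorname{ord}_{p^{j}}(2)\mid2k_{0}$, and one may take $k_{0}=\operatorname{ord}_{p^{j}}(2)$. If $p=3$, the lifting‑the‑exponent lemma gives $v_{3}\!\left(2^{2k_{0}}-1\right)=v_{3}\!\left(4^{k_{0}}-1\right)=1+v_{3}(k_{0})$, so $v_{3}\!\left(\tfrac{2^{2k_{0}}-1}{3}\right)=v_{3}(k_{0})$, and $k_{0}=3^{j}$ works. Either choice proves the conjecture for the odd prime $p$.

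The one genuinely substantial step is the input identity $A_{3}(n)=\tfrac13\sigma(3n+2)$ — recasting $(q^{3};q^{3})_{\infty}^{6}/(q;q)_{\infty}^{2}$ as a Lambert‑type series through the ${}_{1}\psi_{1}$ summation and reading off its coefficients — but this is one of the paper's main results and is needed regardless; granting it (or just the already‑proved identity \eqref{BNid}), the deduction above involves no real obstacle, only the routine case split on $p$ and the use of lifting the exponent when $p=3$. A caveat: for $p=2$ the quantity $\tfrac{2^{2k_{0}(\alpha+1)}-1}{3}$ is odd, and taking $n=0$ makes $A_{3}$ of the stated argument equal to it, so the conjecture can hold only for odd primes and should be read that way.
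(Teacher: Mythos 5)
Your proposal is correct and takes essentially the same route as the paper: both rest on the explicit formula $A_{3}(n)=\frac{1}{3}\sigma(3n+2)$, reduce the claim to $p^{j}\mid\frac{2^{2k_{0}(\alpha+1)}-1}{3}$ via $A_{3}\bigl(4^{K}n+\frac{2^{2K-1}-2}{3}\bigr)=\frac{2^{2K}-1}{3}\sigma(6n+1)$, and then settle this by elementary number theory --- the paper choosing $k_{0}=\frac{1}{2}p^{j}(p-1)$ and invoking Euler's theorem modulo $p^{j+1}$, you choosing the order of $2$ modulo $p^{j}$ (and lifting the exponent when $p=3$). Your caveat that the statement only holds for odd $p$ (it fails at $p=2$, e.g.\ with $n=0$) is a sound observation that the paper leaves implicit in its appeal to Euler's theorem.
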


For more results about ${{a}_{3}}(n)$ and ${{A}_{3}}(n)$, see \cite{Baruah2}--\cite{BaruahJNT} and \cite{Hirschhorn,Lin,XiaRama,Yao}.
	
Recently, the author \cite{WangIJNT} studied the arithmetic properties of $B_{3}(n)$. By elementary $q$ series manipulations, we found three infinite families of arithmetic identities satisfied by ${{B}_{3}}(n)$. For any integer $k\ge 1$, we proved that
\begin{equation}\label{B3id1}
B_{3}(3^{k}n+3^{k}-1)=3^{2k}B_{3}(n),
\end{equation}
\begin{equation}\label{B3id2}
{{B}_{3}}\big({{2}^{k+1}}n+{{2}^{k}}-1\big)=\frac{{{2}^{2k+2}}+{{(-1)}^{k}}}{5}\cdot{{B}_{3}}(2n),
\end{equation}
and
\begin{equation}\label{B3id3}
{{B}_{3}}\big({{2}^{k+1}}n+{{2}^{k+1}}-1\big)=\frac{{{2}^{2k+2}}+{{(-1)}^{k}}}{5}\cdot{{B}_{3}}(2n+1)+\frac{{{2}^{2k+2}}-4{{(-1)}^{k}}}{5}\cdot{{B}_{3}}(n).
\end{equation}

In viewing of (\ref{a3n}), it is natural to ask that whether we can find the explicit formulas for $A_{3}(n)$ and $B_{3}(n)$ or not. In this paper, we give a positive answer to this question. By using Ramanujan's ${}_{1}\psi_{1}$ summation formula and Bailey's ${}_{6}\psi_{6}$ formula, we give a new simple proof of (\ref{a3n}) and find the explicit formulas for $A_{3}(n)$ and $B_{3}(n)$. With these formulas in mind, most of the results mentioned above become direct consequences. In particular, we will confirm Conjecture \ref{Xiaconj} and give some generalizations of (\ref{BNid})--(\ref{B3id3}).

\section{Explicit Formula for $A_{3}(n)$}
Before we present the explicit formula for $A_{3}(n)$, we provide a new elementary proof of (\ref{a3n}). The key tool in this section is Ramanujan's ${}_{1}\psi_{1}$ summation formula \cite[Theorem 1.3.12]{Berndt}.
\begin{lemma}[Ramanujan's ${}_{1}\psi_{1}$ Summation]\label{Rama}
 For $|b/a|<|z|<1$ and $|q|<1$,
\begin{equation}\label{psi}
\sum\limits_{n=-\infty}^{\infty}{\frac{(a;q)_{n}}{(b;q)_{n}}z^{n}=\frac{(az,q/(az),q,b/a;q)_{\infty}}{(z,b/(az),b,q/a;q)_{\infty}}}.
\end{equation}
\end{lemma}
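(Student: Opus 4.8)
The plan is to show that the two sides of \eqref{psi}, viewed as functions of $z$ with $a,b,q$ held fixed, satisfy one and the same first-order $q$-difference equation; this forces them to be proportional, and a Laurent-series argument identifies the proportionality factor as a constant, which a degenerate specialization pins down as $1$.

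First I would exploit the elementary shift $(a;q)_{n}=(1-aq^{n-1})(a;q)_{n-1}$, valid for every $n\in\mathbb Z$, and the analogous identity for $b$. Setting $c_{n}:=\dfrac{(a;q)_{n}}{(b;q)_{n}}z^{n}$ and $f(z):=\sum_{n=-\infty}^{\infty}c_{n}$, these give $(1-bq^{n-1})c_{n}=(1-aq^{n-1})z\,c_{n-1}$ for all $n$. Summing over $n\in\mathbb Z$ (the rearrangement is legitimate, say, when $|b/a|<|q|$, which will suffice) and using $\sum_{n}q^{n}c_{n}=\sum_{n}q^{n-1}c_{n-1}=f(qz)$ together with $\sum_{n}c_{n-1}=f(z)$, one obtains
\begin{equation*}
(1-z)\,f(z)=\frac{b-aqz}{q}\,f(qz).
\end{equation*}
Writing $g(z)$ for the right-hand side of \eqref{psi}, a direct computation applying $(x;q)_{\infty}=(1-x)(xq;q)_{\infty}$ to the four $z$-dependent Pochhammer factors shows $g(z)/g(qz)=(b-aqz)/\bigl(q(1-z)\bigr)$, so $g$ satisfies the identical equation.

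Hence $h(z):=f(z)/g(z)$ satisfies $h(qz)=h(z)$. For parameters in general position $f$ is analytic and $g$ is analytic and non-vanishing on the annulus $|b/a|<|z|<1$, so $h$ is analytic there; the relations $h(z)=h(z/q)$ and $h(z)=h(qz)$ then propagate $h$ to arbitrarily small and large radii and extend it to an analytic function on $\Complex\setminus\{0\}$. Matching coefficients in its Laurent expansion, $\sum_{k}h_{k}z^{k}=\sum_{k}h_{k}q^{k}z^{k}$, forces $h_{k}=0$ for all $k\neq0$, so $h\equiv C$ for some constant $C=C(a,b,q)$. To evaluate $C$, I would set $b=q$: then $1/(q;q)_{n}=0$ for every $n<0$, so the bilateral series collapses to $\sum_{n\ge0}\frac{(a;q)_{n}}{(q;q)_{n}}z^{n}=\frac{(az;q)_{\infty}}{(z;q)_{\infty}}$ by the $q$-binomial theorem, while the right side of \eqref{psi} simplifies to the same product; thus $C=1$ at $b=q$, and since both sides of \eqref{psi} are analytic in $b$ on $|b|<|az|$, analytic continuation in $b$ yields $C\equiv1$. (Alternatively $C=1$ can be read off from the simple pole of $f$ at $z=b/a$, whose residue is computed from the tail identity $\frac{(a;q)_{-m}}{(b;q)_{-m}}=(b/a)^{m}\frac{(q/b;q)_{m}}{(q/a;q)_{m}}$.)

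The step requiring the most care is the passage from ``$h$ is a $q$-periodic analytic function on the annulus'' to ``$h$ is constant'': for the propagation to reach all of $\Complex\setminus\{0\}$, the annulus $|b/a|<|z|<1$ must contain a fundamental domain for $z\mapsto qz$, which needs $|b/a|<|q|$; the identity in the full stated parameter range is then recovered by analytic continuation in the parameters. Everything else reduces to routine bookkeeping with $q$-Pochhammer shifts.
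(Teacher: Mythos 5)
The paper does not actually prove Lemma \ref{Rama}; it quotes it from \cite[Theorem 1.3.12]{Berndt}, so your argument has to stand on its own. Your computations are correct as far as they go: both the bilateral sum $f$ and the product $g$ do satisfy $(1-z)F(z)=\frac{b-aqz}{q}F(qz)$, and the evaluation at $b=q$ via the $q$-binomial theorem is sound. The genuine gap is at the step you yourself flagged, and it is not a matter of ``general position''. The $z$-dependent part of the numerator of $g$, namely $(az;q)_{\infty}(q/(az);q)_{\infty}$, is a theta-type product whose zero set is the \emph{entire} geometric orbit $\{q^{m}/a:\ m\in\mathbb{Z}\}$. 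Hence any annulus wide enough to contain a fundamental domain for $z\mapsto qz$ must meet this zero set: in the regime $|b/a|<|q|$ that you need for the propagation step, the interval $(|b|,|a|)$ has ratio exceeding $1/|q|$ and therefore contains some $|q|^{m}$, so $g$ vanishes at $z=q^{m}/a$ inside $|b/a|<|z|<1$, and generically the denominator zeros $q^{-k}$, $bq^{k}/a$ do not cancel it. At this stage of the proof you do not know that $f$ vanishes there too (that is part of what is being proved), so $h=f/g$ is a priori only meromorphic on $\Complex\setminus\{0\}$, and the coefficientwise argument $h_{k}=q^{k}h_{k}$ has no global Laurent expansion to apply to. So the constancy of $h$ is not established as written.

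The gap is repairable, and in two ways that stay close to your plan. The cleanest is to avoid dividing at all: $f$ and $g$ are both analytic on the annulus, and your functional equation forces their Laurent coefficients there to satisfy the same two-term recursion $A_{n}(1-bq^{n-1})=(1-aq^{n-1})A_{n-1}$, which for generic $a,b$ determines each function up to its constant term; hence $f\cdot B_{0}=g\cdot A_{0}$ with $A_{0}=1$, and the remaining identity $B_{0}=1$ follows from your $b=q$ specialization (or from the Jacobi triple product), after which analytic continuation in the parameters finishes the proof exactly as you intended. Alternatively, keep $h$ but upgrade the constancy argument: the $q$-periodic meromorphic function $h$ descends to the torus $\Complex^{*}/q^{\mathbb{Z}}$ with at most one, generically simple, pole per fundamental annulus (the single orbit point $q^{m}/a$), and an elliptic function with at most one simple pole is constant by the residue theorem. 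Either repair yields a complete proof; as submitted, the central step ``$h$ is analytic, hence constant'' fails.
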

\begin{proof}[Proof of $(\ref{a3n})$]	
Setting $t=3$ in (\ref{atgen}), we get
\begin{displaymath}
\sum\limits_{n=0}^{\infty }{{{a}_{3}}(n){{q}^{n}}}=\frac{(q^3;q^3)_{\infty}^{3}}{(q;q)_{\infty}}.
\end{displaymath}
Note that
\begin{equation}\label{expan}
(q;q)_{\infty}=(q;q^3)_{\infty}(q^2;q^3)_{\infty}(q^3;q^3)_{\infty},
\end{equation}
we have
\begin{equation}\label{a3nstart}
\sum\limits_{n=0}^{\infty}{a_{3}(n)q^n}=\frac{(q^3;q^3)_{\infty}^{2}}{(q;q^3)_{\infty}(q^2;q^3)_{\infty}}.
\end{equation}
Taking $(a,b,z,q) \rightarrow (q,q^{4},q,q^3)$ in (\ref{psi}), we obtain
\[\sum\limits_{n=-\infty}^{\infty}{\frac{(q;q^3)_{n}}{(q^4;q^3)_{n}}\cdot q^{n}}=\frac{(q^2,q,q^3,q^3;q^3)_{\infty}}{(q,q^2,q^4,q^2;q^3)_{\infty}}.\]
Dividing both sides by $1-q$, after simplification, we get
\begin{equation}\label{a3nmid}
\sum\limits_{n=-\infty}^{\infty}{\frac{q^n}{1-q^{3n+1}}}=\frac{(q^3;q^3)_{\infty}^{2}}{(q;q^3)_{\infty}(q^2;q^3)_{\infty}}.
\end{equation}
Combining (\ref{a3nstart}) with (\ref{a3nmid}), we obtain
\[\sum\limits_{n=0}^{\infty}{a_{3}(n)q^n}=\sum\limits_{n=-\infty}^{\infty}{\frac{q^n}{1-q^{3n+1}}}.\]
Replacing $q$ by $q^3$ and multiplying both sides by $q$, we get
\begin{equation}\label{a3nfinal}
\begin{split}
\sum\limits_{n=0}^{\infty}{a_{3}(n)q^{3n+1}}&=\sum\limits_{m=0}^{\infty}{\frac{q^{3m+1}}{1-q^{3(3m+1)}}}+\sum\limits_{m=-\infty}^{-1}{\frac{q^{3m+1}}{1-q^{3(3m+1)}}}  \\
&=\sum\limits_{m=0}^{\infty}{\frac{q^{3m+1}}{1-q^{3(3m+1)}}}+\sum\limits_{m=0}^{\infty}{\frac{q^{-3m-2}}{1-q^{3(-3m-2)}}}\\
&=\sum\limits_{m=0}^{\infty}{\frac{q^{3m+1}}{1-q^{3(3m+1)}}}-\sum\limits_{m=0}^{\infty}{\frac{q^{2(3m+2)}}{1-q^{3(3m+2)}}}\\
&=\sum\limits_{m=0}^{\infty}\sum\limits_{k=0}^{\infty}{q^{(3m+1)(3k+1)}}-\sum\limits_{m=0}^{\infty}\sum\limits_{k=0}^{\infty}{q^{(3m+2)(3k+2)}},
\end{split}
\end{equation}
here the second equality follows by replacing $m$ by $-m-1$ in the second summation.

Now (\ref{a3n}) follows by comparing the coefficients of $q^{3n+1}$ on both sides of (\ref{a3nfinal}).
\end{proof}
Let $\sigma (n)$ denote the sum of positive divisors of $n$. Applying the method in proving (\ref{a3n}), we can find the explicit formula for $A_{3}(n)$.
\begin{theorem}\label{A3n}
For any integer $n\ge 0$, we have $A_{3}(n)=\frac{1}{3}\sigma(3n+2)$. If we write $3n+2=\prod\limits_{i=1}^{s}p_{i}^{\alpha_i}$ as the unique prime factorization, then
\[A_{3}(n)=\frac{1}{3}\prod\limits_{i=1}^{s}\frac{p_{i}^{\alpha_{i}+1}-1}{p_i-1}.\]
\end{theorem}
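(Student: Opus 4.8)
The plan is to mimic the proof of \eqref{a3n} given above, replacing the cube of the Dedekind-eta-type product by its sixth power and using Ramanujan's ${}_{1}\psi_{1}$ summation once more. Starting from \eqref{At} with $t=3$, we have
\[
\sum_{n=0}^{\infty} A_{3}(n)q^{n} = \frac{(q^{3};q^{3})_{\infty}^{6}}{(q;q)_{\infty}^{2}},
\]
and applying \eqref{expan} twice in the denominator gives
\[
\sum_{n=0}^{\infty} A_{3}(n)q^{n} = \frac{(q^{3};q^{3})_{\infty}^{4}}{(q;q^{3})_{\infty}^{2}(q^{2};q^{3})_{\infty}^{2}}.
\]
The right-hand side is, up to elementary factors, the square of the series produced by the ${}_{1}\psi_{1}$ specialization $(a,b,z,q)\to(q,q^{4},q,q^{3})$ used in \eqref{a3nmid}; alternatively one can run $(a,b,z,q)\to(q^{2},q^{5},q,q^{3})$ or differentiate to pick up the extra factor. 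I expect the cleanest route is to recognize the quotient as a Lambert-type series: establish an identity of the shape
\[
\frac{(q^{3};q^{3})_{\infty}^{4}}{(q;q^{3})_{\infty}^{2}(q^{2};q^{3})_{\infty}^{2}} = \sum_{n=-\infty}^{\infty} \frac{(3n+1)q^{n}}{1-q^{3n+1}} \quad\text{(up to sign/index bookkeeping)},
\]
where the weight $3n+1$ (rather than a constant) is exactly what produces $\sigma$ after the substitution $q\mapsto q^{3}$. To get that weighted Lambert series one can either square \eqref{a3nmid} and resum, or apply the ${}_{1}\psi_{1}$ formula and then differentiate in $z$ before setting $z=q$.

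Having obtained such a Lambert series, I would replace $q$ by $q^{3}$ and multiply by $q^{2}$ so that the exponents become $\equiv 2 \pmod 3$, matching the residue class $3n+2$. Splitting the bilateral sum at $n=0$ and substituting $n\mapsto -n-1$ in the negative tail (exactly as in \eqref{a3nfinal}) collapses the two pieces into a single sum over positive integers, and expanding each term $\frac{q^{a}}{1-q^{b}}=\sum_{k\ge 0}q^{a+bk}$ turns the whole expression into
\[
\sum_{n\ge 0} A_{3}(n)\,q^{3n+2} = \sum_{d\mid m,\ m\equiv 2\,(3)} d\, q^{m} \quad\text{(schematically)},
\]
from which $A_{3}(n) = \tfrac13\sum_{d\mid 3n+2} d = \tfrac13\sigma(3n+2)$ follows by comparing coefficients of $q^{3n+2}$; the factor $\tfrac13$ appears because every divisor $d$ of $3n+2$ is counted with the divisor $d$ versus the complementary divisor contributing the two sums, or because the weight $3n+1$ on re-substitution becomes the full divisor. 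The final explicit product formula is then just the standard multiplicativity of $\sigma$, namely $\sigma\bigl(\prod p_i^{\alpha_i}\bigr)=\prod \frac{p_i^{\alpha_i+1}-1}{p_i-1}$.

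The main obstacle will be pinning down the exact Lambert-series identity with the correct linear weight and the correct constant $\tfrac13$: the ${}_{1}\psi_{1}$ formula delivers products, not weighted Lambert series, so one must either differentiate the summation formula with respect to a parameter (which introduces $\psi$-function or logarithmic-derivative terms that have to be simplified back into the clean form $\sum (3n+1)q^n/(1-q^{3n+1})$) or else manipulate the square of \eqref{a3nmid} using a partial-fraction / convolution identity for $\frac{1}{(1-q^{a})(1-q^{b})}$. Getting the bookkeeping of signs and the shift $n\mapsto -n-1$ right in the bilateral sum — so that the surviving residue class is precisely $2\pmod 3$ and the coefficient of $q^{3n+2}$ is exactly $\tfrac13\sigma(3n+2)$ rather than $\sigma(3n+2)$ or $2\sigma$ — is the delicate part; once the weighted Lambert series is in hand, the rest is the same routine double-sum expansion as in \eqref{a3nfinal}.
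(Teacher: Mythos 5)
Your overall strategy is the same as the paper's: extract a weighted bilateral Lambert series from Ramanujan's ${}_{1}\psi_{1}$, replace $q$ by $q^3$, multiply by $q^2$, fold the negative tail via $n\mapsto -n-1$, expand geometric series and compare coefficients. But the pivot of the argument --- the exact Lambert-series identity --- is exactly what you leave open, and the candidate you wrote down is false: for
$\sum_{n=-\infty}^{\infty}\frac{(3n+1)q^{n}}{1-q^{3n+1}}$
the coefficient of $q^{1}$ is $1+4+2=7$ (from the terms $n=0,1,-1$), whereas $A_{3}(1)=2$. The correct identity, which the paper obtains by precisely the ``differentiate in $z$'' route you mention (but at $z=q^{2}$, not $z=q$: keep $z$ free in (\ref{psi}) with $(a,b,q)\to(q,q^{4},q^{3})$, divide both sides by $1-q^{2}/z$, and let $z\to q^{2}$ by L'Hospital, as in (\ref{mid1})--(\ref{mid2})), is
\begin{equation*}
\sum_{n=0}^{\infty}A_{3}(n)q^{n}=\sum_{n=-\infty}^{\infty}\frac{n\,q^{2n}}{1-q^{3n+1}},
\end{equation*}
i.e.\ weight $n$ and numerator $q^{2n}$, not weight $3n+1$ with $q^{n}$. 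Your other suggested route --- squaring (\ref{a3nmid}) and ``resumming'' --- is not obviously workable: the square of a Lambert series is a genuine convolution and does not collapse to a single weighted Lambert series without further identities, so as written that alternative would stall.

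The second missing piece is the mechanism that produces the factor $\tfrac13$. After the substitution and the fold one arrives at $\sum_{m,k\ge 0}\bigl(m\,q^{(3m+1)(3k+2)}+(m+1)\,q^{(3m+2)(3k+1)}\bigr)$, with weights $m$ and $m+1$, not divisors. The paper's trick is to write $m=\tfrac13\bigl((3m+1)-1\bigr)$ and $m+1=\tfrac13\bigl((3m+2)+1\bigr)$, so the sum becomes $\tfrac13\sum\bigl((3m+1)q^{(3m+1)(3k+2)}+(3m+2)q^{(3m+2)(3k+1)}\bigr)$ plus a correction $\tfrac13\sum\bigl(q^{(3m+2)(3k+1)}-q^{(3m+1)(3k+2)}\bigr)$, and the correction vanishes by interchanging $m$ and $k$; only then does every divisor of $3n+2$ get counted with weight equal to itself, giving $\tfrac13\sigma(3n+2)$. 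Your account of where the $\tfrac13$ comes from is a guess rather than an argument, so this symmetrization step needs to be supplied. The final step (multiplicativity of $\sigma$ giving the product formula) is fine.
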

\begin{proof}
Setting $t=3$ in (\ref{At}), and applying (\ref{expan}) we obtain that
\begin{equation}\label{A3expan}
\sum\limits_{n=0}^{\infty}{A_{3}(n)q^n}=\frac{(q^3;q^3)_{\infty}^{4}}{(q;q^3)_{\infty}^{2}(q^2;q^3)_{\infty}^{2}}.
\end{equation}

Taking $(a,b,q)\rightarrow (q,q^4,q^3)$ in (\ref{Rama}), and dividing both sides by $1-\frac{q^2}{z}$, we obtain
\begin{equation}\label{mid1}
\sum\limits_{n=-\infty}^{\infty}{\frac{(q;q^3)_{n}}{(q^4;q^3)_{n}}\cdot \frac{z^n}{1-q^2/z}} =\frac{(qz,q^5/z,q^3,q^3;q^3)_{\infty}}{(z,q^3/z,q^4,q^2;q^3)_{\infty}}.
\end{equation}

Let $z\rightarrow q^2$. By L'Hospital's rule, we deduce that
\begin{displaymath}
\sum\limits_{n=-\infty}^{\infty}{\frac{(1-q)\cdot nq^{2n}}{1-q^{3n+1}}}=\frac{(q^3;q^3)_{\infty}^{4}}{(q^2;q^3)_{\infty}^{2}(q;q^3)_{\infty}(q^4;q^3)_{\infty}}.
\end{displaymath}
Dividing both sides by $1-q$ and combining with (\ref{A3expan}), we obtain
\begin{equation}\label{mid2}
\sum\limits_{n=0}^{\infty}{A_{3}(n)q^n}=\frac{(q^3;q^3)_{\infty}^{4}}{(q;q^3)_{\infty}^{2}(q^2;q^3)_{\infty}^{2}}=\sum\limits_{n=-\infty}^{\infty}{\frac{nq^{2n}}{1-q^{3n+1}}}.
\end{equation}
Replacing $q$ by $q^3$ and multiplying both sides by $q^2$, we see that
\begin{equation}\label{A3nfinal}
\begin{split}
\sum\limits_{n=0}^{\infty}{A_{3}(n)q^{3n+2}}&=\sum\limits_{m=0}^{\infty}{\frac{mq^{2(3m+1)}}{1-q^{3(3m+1)}}}+\sum\limits_{m=-\infty}^{-1}{\frac{mq^{2(3m+1)}}{1-q^{3(3m+1)}}}
\\ &=\sum\limits_{m=0}^{\infty}{\frac{mq^{2(3m+1)}}{1-q^{3(3m+1)}}}+\sum\limits_{m=0}^{\infty}{\frac{(-m-1)q^{2(-3m-2)}}{1-q^{3(-3m-2)}}}\\
&=\sum\limits_{m=0}^{\infty}{\frac{mq^{2(3m+1)}}{1-q^{3(3m+1)}}}+\sum\limits_{m=0}^{\infty}{\frac{(m+1)q^{3m+2}}{1-q^{3(3m+2)}}}\\
&=\sum\limits_{m=0}^{\infty}\sum\limits_{k=0}^{\infty}{mq^{(3m+1)(3k+2)}} +\sum\limits_{m=0}^{\infty}\sum\limits_{k=0}^{\infty}{(m+1)q^{(3m+2)(3k+1)}} \\
&=\frac{1}{3}\sum\limits_{m=0}^{\infty}\sum\limits_{k=0}^{\infty}{\Big((3m+1)q^{(3m+1)(3k+2)}+(3m+2)q^{(3m+2)(3k+1)}\Big)}\\
&\quad +\frac{1}{3}\sum\limits_{m=0}^{\infty}\sum\limits_{k=0}^{\infty}{\Big(q^{(3m+2)(3k+1)}-q^{(3m+1)(3k+2)}\Big)}\\
\end{split}
\end{equation}
Interchanging the roles of $k$ and $m$, we see that
\[\sum\limits_{m=0}^{\infty}\sum\limits_{k=0}^{\infty}{q^{(3m+2)(3k+1)}}=\sum\limits_{k=0}^{\infty}\sum\limits_{m=0}^{\infty}{q^{(3k+2)(3m+1)}}=\sum\limits_{m=0}^{\infty}\sum\limits_{k=0}^{\infty}{q^{(3m+1)(3k+2)}}.\]
Thus the second sum in the right hand side of (\ref{A3nfinal}) vanishes. Comparing the coefficients of $q^{3n+2}$ on both sides of (\ref{A3nfinal}), we prove the first assertion of the theorem. The second assertion then follows immediately.
\end{proof}

Once we know the explicit formula of $A_{3}(n)$, we can verify those identities in (\ref{BNid}) by simple arguments. For example, since $\sigma(n)$ is multiplicative, by Theorem \ref{A3n} we have
\[A_{3}(4n)=\frac{1}{3}\sigma(2(6n+1))=\frac{1}{3}\sigma(2)\sigma(6n+1)=\sigma(6n+1),\]
\[A_{3}\Big(2^{2k+2}n+\frac{2(2^{2k}-1)}{3}\Big)=\frac{1}{3}\sigma\big(2^{2k+1}(6n+1)\big)=\frac{1}{3}\sigma(2^{2k+1})\sigma(6n+1).\]
Note that $\sigma(2^{2k+1})=2^{2k+2}-1$, this proves the first identity in (\ref{BNid}). Others can be proved in a similar way.

Moreover, we can extend (\ref{BNid}) to some large families of arithmetic identities.
\begin{theorem}\label{Arelation}
Let $p$ be a prime and $k$, $n$ be nonnegative integers. \\
(1) If $p\equiv 1$ \text{\rm{(mod $3$)}}, we have
\[A_{3}\Big(p^kn+\frac{2p^{k}-2}{3}\Big)=\frac{p^{k}-1}{p-1}A_{3}\Big(pn+\frac{2p-2}{3}\Big)-\frac{p^{k}-p}{p-1}A_{3}(n).\]
(2) If $p\equiv 2$ \text{\rm{(mod $3$)}}, we have
\[A_{3}\Big(p^{2k}n+\frac{2p^{2k}-2}{3}\Big)=\frac{p^{2k}-1}{p^2-1}A_{3}\Big(p^2n+\frac{2p^2-2}{3}\Big)-\frac{p^{2k}-p^2}{p^2-1}A_{3}(n).\]
\end{theorem}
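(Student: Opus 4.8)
The plan is to reduce both parts to the closed form $A_{3}(n)=\tfrac13\sigma(3n+2)$ of Theorem~\ref{A3n} and then to a single elementary $\sigma$-identity. First I would do the arithmetic bookkeeping. If $p\equiv 1\pmod 3$ then $p^{k}\equiv 1\pmod 3$ for every $k\ge 0$, so $(2p^{k}-2)/3$ is a nonnegative integer and $3\big(p^{k}n+(2p^{k}-2)/3\big)+2=p^{k}(3n+2)$; if instead $p\equiv 2\pmod 3$ then $p^{2}\equiv 1\pmod 3$, whence $(2p^{2k}-2)/3$ is a nonnegative integer and $3\big(p^{2k}n+(2p^{2k}-2)/3\big)+2=p^{2k}(3n+2)$. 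Writing $M:=3n+2$, Theorem~\ref{A3n} therefore turns part~(1) into
\[\sigma(p^{k}M)=\frac{p^{k}-1}{p-1}\,\sigma(pM)-\frac{p^{k}-p}{p-1}\,\sigma(M),\]
and part~(2) into
\[\sigma(p^{2k}M)=\frac{p^{2k}-1}{p^{2}-1}\,\sigma(p^{2}M)-\frac{p^{2k}-p^{2}}{p^{2}-1}\,\sigma(M).\]
Note that $p=3$ is excluded from both cases, so the precise value $M=3n+2$ is irrelevant.

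Both displayed claims are instances, for $j=1$ and $j=2$, of the identity
\[\sigma(p^{jk}M)=\frac{p^{jk}-1}{p^{j}-1}\,\sigma(p^{j}M)-\frac{p^{jk}-p^{j}}{p^{j}-1}\,\sigma(M),\]
valid for any prime $p$, any $j\ge 1$, any $k\ge 0$, and any positive integer $M$. To prove it I would write $M=p^{a}m$ with $p\nmid m$; by multiplicativity of $\sigma$ and $\sigma(p^{e})=(p^{e+1}-1)/(p-1)$, every term is $\sigma(m)/(p-1)$ times a power-difference, and after cancelling $\sigma(m)$ and clearing denominators the claim becomes the polynomial identity
\[(p^{j}-1)\big(p^{jk+a+1}-1\big)=\big(p^{jk}-1\big)\big(p^{j+a+1}-1\big)-\big(p^{jk}-p^{j}\big)\big(p^{a+1}-1\big),\]
both sides of which expand to $p^{jk+a+j+1}-p^{jk+a+1}-p^{j}+1$. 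Equivalently, and a little more conceptually, $x_{k}:=\sigma(p^{jk}M)=\tfrac{\sigma(m)}{p-1}\big(p^{a+1}(p^{j})^{k}-1\big)$ has the shape $Cr^{k}+D$ with $r=p^{j}$, and any such sequence satisfies $x_{k}=\tfrac{r^{k}-1}{r-1}x_{1}-\tfrac{r^{k}-r}{r-1}x_{0}$ (its two characteristic roots being $r$ and $1$); substituting $x_{0}=\sigma(M)$ and $x_{1}=\sigma(p^{j}M)$ yields the result.

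I do not anticipate a real obstacle here. The only point needing attention is the divisibility by $3$ that makes $(2p^{k}-2)/3$ (respectively $(2p^{2k}-2)/3$) an integer: this is exactly what forces the split into $p\equiv 1$ and $p\equiv 2\pmod 3$ and the appearance of the exponent $2k$ in part~(2). After that the whole argument is the multiplicativity of $\sigma$ together with a one-line polynomial check, and it is insensitive to whether or not $p\mid 3n+2$, since the auxiliary exponent $a$ cancels out of the final identity.
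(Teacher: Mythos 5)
Your proposal is correct and follows essentially the same route as the paper: reduce both parts to $A_{3}(n)=\tfrac13\sigma(3n+2)$, pull out the $p$-part of $3n+2$, use multiplicativity of $\sigma$, and finish with the power identity that the paper calls ``direct verification.'' Your explicit polynomial check (equivalently the $Cr^{k}+D$ recurrence) and the unified parameter $j$ merely make that verification explicit, so there is nothing to correct.
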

\begin{proof}
We write $3n+2=p^{m}N$, where $N$ is an integer not divisible by $p$.

(1) By Theorem \ref{A3n} we deduce that
\begin{equation}\label{Astart1}
A_{3}(n)=\frac{1}{3}\sigma(p^mN)=\frac{1}{3}\sigma(p^m)\sigma(N)=\frac{1}{3}\cdot \frac{p^{m+1}-1}{p-1}\sigma(N),
\end{equation}
Similarly we have
\begin{equation}\label{Astart2}
A_{3}\big(pn+\frac{2p-2}{3}\big)=\frac{1}{3}\sigma(p^{m+1}N)=\frac{1}{3}\cdot \frac{p^{m+2}-1}{p-1}\sigma(N),
\end{equation}
\begin{equation}\label{Astart3}
A_{3}\Big(p^kn+\frac{2p^k-2}{3}\Big)=\frac{1}{3}\sigma(p^{k+m}N)=\frac{1}{3}\cdot \frac{p^{k+m+1}-1}{p-1}\sigma(N).
\end{equation}
Now the assertion follows from (\ref{Astart1})--(\ref{Astart3}) by direct verification.

(2) In the same way we have
\begin{equation}\label{Astart4}
A_{3}\big(p^2n+\frac{2p^2-2}{3}\big)=\frac{1}{3}\sigma(p^{m+2}N)=\frac{1}{3}\cdot \frac{p^{m+3}-1}{p-1} \sigma(N),
\end{equation}
and
\begin{equation}\label{Astart5}
A_{3}\Big(p^{2k}n+\frac{2p^{2k}-2}{3}\Big)=\frac{1}{3}\sigma(p^{2k+m}N)=\frac{1}{3}\cdot \frac{p^{2k+m+1}-1}{p-1}\sigma(N).
\end{equation}
Combining (\ref{Astart1}), (\ref{Astart4}) and (\ref{Astart5}), we prove the assertion by direct verification.
\end{proof}

Setting $p=2,5,7$ in Theorem \ref{Arelation}, we obtain the following arithmetic identities for $k, n\ge 0$,
\begin{displaymath}
\begin{split}
A_{3}\Big(2^{2k}n+\frac{2^{2k+1}-2}{3}\Big)&=\frac{2^{2k}-1}{3}A_{3}(4n+2)-\frac{2^{2k}-4}{3}A_{3}(n),\\
A_{3}\Big(5^{2k}n+\frac{2\cdot 5^{2k}-2}{3}\Big)&=\frac{5^{2k}-1}{24}A_{3}(25n+16)-\frac{5^{2k}-25}{24}A_{3}(n),\\
A_{3}\Big(7^{k}n+\frac{2\cdot 7^k-2}{3}\Big)& =\frac{7^k-1}{6}A_{3}(7n+4)-\frac{7^k-7}{6}A_{3}(n).
\end{split}
\end{displaymath}

\begin{theorem}\label{Arelation}
Let $p$ be a prime and $k, n$ be nonnegative integers such that $p\nmid 3n+2$. \\
(1) If $p\equiv 1$ \text{\rm{(mod $3$)}}, we have
\[A_{3}\Big(p^kn+\frac{2p^{k}-2}{3}\Big)=\frac{p^{k+1}-1}{p-1}A_{3}(n).\]
(2) If $p\equiv 2$ \text{\rm{(mod $3$)}}, we have
\[A_{3}\Big(p^{2k}n+\frac{2p^{2k}-2}{3}\Big)=\frac{p^{2k+1}-1}{p-1}A_{3}(n).\]
\end{theorem}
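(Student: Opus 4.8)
The plan is to obtain both parts as an immediate consequence of Theorem \ref{A3n}, i.e.\ of the identity $A_3(n)=\tfrac13\sigma(3n+2)$, together with the multiplicativity of $\sigma$. The only role of the hypothesis $p\nmid 3n+2$ is to make the relevant factorization coprime: in the notation of the preceding theorem, writing $3n+2=p^mN$ with $p\nmid N$, the new hypothesis simply says $m=0$, so the three-term recurrence of that theorem collapses to the single closed-form relation stated here. Accordingly I would present this as a corollary and re-derive it directly rather than specializing the earlier proof.

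First I would check that the arguments of $A_3$ on the left are genuine nonnegative integers. If $p\equiv 1\pmod 3$ then $p^k\equiv 1\pmod 3$, hence $2p^k-2\equiv 0\pmod 3$; if $p\equiv 2\pmod 3$ then $p^{2k}\equiv 4^k\equiv 1\pmod 3$, hence $2p^{2k}-2\equiv 0\pmod 3$. In either case the quantity inside $A_3(\cdot)$ is a nonnegative integer. Next, set $m:=p^kn+\tfrac{2p^k-2}{3}$ in case (1) (resp.\ $m:=p^{2k}n+\tfrac{2p^{2k}-2}{3}$ in case (2)); a one-line computation gives $3m+2=p^k(3n+2)$ (resp.\ $p^{2k}(3n+2)$). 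Since $p\nmid 3n+2$, the factors $p^k$ and $3n+2$ (resp.\ $p^{2k}$ and $3n+2$) are coprime, so Theorem \ref{A3n} and multiplicativity of $\sigma$ yield
\[
A_3(m)=\tfrac13\sigma\!\big(p^k(3n+2)\big)=\tfrac13\sigma(p^k)\,\sigma(3n+2)=\frac{p^{k+1}-1}{p-1}\cdot\tfrac13\sigma(3n+2)=\frac{p^{k+1}-1}{p-1}A_3(n),
\]
using $\sigma(p^k)=(p^{k+1}-1)/(p-1)$; the same computation with $\sigma(p^{2k})=(p^{2k+1}-1)/(p-1)$ gives case (2).

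There is no real obstacle: once the integrality of the shifts is verified, everything reduces to the geometric-series formula for $\sigma$ of a prime power. The only point requiring a little care is purely bookkeeping — namely that for $p\equiv 2\pmod 3$ one must use an \emph{even} exponent $2k$ (and the shift $\tfrac{2p^{2k}-2}{3}$) so that $3m+2$ remains $p^{2k}(3n+2)$ with $3m+2\equiv 2\pmod 3$, whereas for $p\equiv 1\pmod 3$ any exponent $k$ works; this mirrors exactly the parity restriction already present in the Hirschhorn–Sellers corollary for $a_3(n)$ quoted in the introduction.
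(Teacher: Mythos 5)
Your proposal is correct and follows essentially the same route as the paper: reduce via $3m+2=p^k(3n+2)$ (resp.\ $p^{2k}(3n+2)$) to $A_3(n)=\frac13\sigma(3n+2)$ from Theorem \ref{A3n}, then use multiplicativity of $\sigma$ and $\sigma(p^k)=(p^{k+1}-1)/(p-1)$. The integrality check and the remark on the even exponent for $p\equiv 2\pmod 3$ are welcome but minor additions to the paper's two-line argument.
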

\begin{proof}
From Theorem \ref{A3n}, we deduce that
\[A_{3}\Big(p^kn+\frac{2p^{k}-2}{3}\Big)=\frac{1}{3}\sigma\big(p^k(3n+2)\big)=\frac{1}{3}\sigma(p^k)\sigma(3n+2)=\frac{p^{k+1}-1}{p-1}A_{3}(n).\]
This implies (1). (2) can be proved in a similar way.
\end{proof}
For example, let $p=2$ and replacing $n$ by $2n+1$ in (2), we obtain the third identity of (\ref{BNid}).
If we set $p=5$ (resp.\ $p=7$) and replace $n$ by $5n+r$ (resp. \ $7n+r$), we deduce that for $k,n \ge 0$,
\[A_{3}\Big(5^{2k}(5n+r)+\frac{2\cdot 5^{2k}-2}{3}\Big)=\frac{5^{2k+1}-1}{4}A_{3}(5n+r), \quad  r\in \{0,2,3,4\}\]
and
\[A_{3}\Big(7^{k}(7n+r)+\frac{2\cdot 7^k-2}{3}\Big)=\frac{7^{k+1}-1}{6}A_{3}(7n+r), \quad r\in \{0,1,2,3,5,6\}.\]

We conclude this section by proving Conjecture \ref{Xiaconj}.
\begin{proof}[Proof of Conjecture \ref{Xiaconj}]
By Theorem \ref{A3n}, we get
\begin{equation}\label{Amid1}
\begin{split}
A_{3}\Big(4^{k_{0}(\alpha+1)}n+\frac{2^{2k_{0}(\alpha+1)-1}-2}{3}\Big)&=\frac{1}{3}\sigma\big(2^{2k_{0}(\alpha+1)-1}(6n+1)\big)\\
&=\frac{2^{2k_{0}(\alpha+1)}-1}{3}\sigma(6n+1).
\end{split}
\end{equation}
Let $k_{0}=\frac{1}{2}p^{j}(p-1)$. Since $2k_{0}(\alpha+1) \equiv 0$ (mod $p^{j}(p-1)$), by Euler's theorem, we have
$2^{2k_{0}(\alpha+1)} \equiv 1$ (mod $p^{j+1}$). From (\ref{Amid1}) the conjecture follows immediately.
\end{proof}
Indeed, most of the congruences found by Xia \cite{XiaRama} can be proved by using Theorem \ref{A3n}. We omit the details here.

\section{Explicit Formula for $B_{3}(n)$}
In order to find the explicit formula for $B_{3}(n)$, we need the following formula.
\begin{lemma}[Bailey's ${}_{6}\psi_{6}$ formula]
For $|qa^2/(bcde)|<1$,
\begin{equation}\label{Bailey}
\begin{split}
&\quad {}_{6}\psi_{6}\Big(\begin{matrix} q\sqrt{a},& -q \sqrt{a},& b,& c,& d, &e\\ \sqrt{a},& -\sqrt{a},& aq/b, &aq/c, &aq/d, &aq/e \end{matrix}; q, \frac{qa^2}{bcde}\Big)\\
&=\frac{(aq,aq/(bc),aq/(bd),aq/(be),aq/(cd),aq/(ce),aq/(de),q,q/a;q)_{\infty}}{(aq/b,aq/c,aq/d,aq/e,q/b,q/c,q/d,q/e,qa^2/(bcde);q)_{\infty}}.
\end{split}
\end{equation}
\end{lemma}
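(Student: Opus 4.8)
Formula (\ref{Bailey}) is classical and in practice one simply quotes it, but here is the route we would take. The plan is to prove it by analytic continuation, taking as our starting point Rogers' nonterminating very-well-poised ${}_{6}\phi_{5}$ summation
\begin{multline*}
\sum_{n=0}^{\infty}\frac{1-aq^{2n}}{1-a}\cdot\frac{(a;q)_{n}(b;q)_{n}(c;q)_{n}(d;q)_{n}}{(q;q)_{n}(aq/b;q)_{n}(aq/c;q)_{n}(aq/d;q)_{n}}\left(\frac{aq}{bcd}\right)^{n}\\
=\frac{(aq,aq/(bc),aq/(bd),aq/(cd);q)_{\infty}}{(aq/b,aq/c,aq/d,aq/(bcd);q)_{\infty}},
\end{multline*}
valid for $|aq/(bcd)|<1$. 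This formula plays here the role that the $q$-binomial theorem plays in one standard proof of Ramanujan's ${}_{1}\psi_{1}$ summation (Lemma~\ref{Rama}).

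The first step is an analyticity observation. Fix $a,b,c,d,q$ and view both sides of (\ref{Bailey}) as functions of $e$. A routine bound on the general term shows that the bilateral series on the left converges locally uniformly for $|e|>|qa^{2}/(bcd)|$ and extends holomorphically to $e=\infty$ (as a function of $1/e$), since there the argument $qa^{2}/(bcde)$ tends to $0$. On the same region the product on the right is a ratio of convergent infinite products whose $e$-dependent factors all have the form $(\mathrm{const}/e;q)_{\infty}$; these are entire in $1/e$ and tend to $1$ as $e\to\infty$, so the right side is holomorphic near $e=\infty$ as well. Hence, by the identity theorem, it suffices to prove (\ref{Bailey}) for $e=q^{-N}$, $N=0,1,2,\dots$, because for all large $N$ these points lie in the region of convergence and they accumulate at $e=\infty$.

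The second step evaluates the specialization $e=q^{-N}$. Then $(e;q)_{n}=(q^{-N};q)_{n}$ vanishes for $n>N$, so the bilateral series in (\ref{Bailey}) truncates from above, to $n\le N$. Putting $n=N-k$ with $k\ge 0$ and using the standard relations $(x;q)_{N-k}=(x;q)_{N}/(xq^{N-k};q)_{k}$ and $(x;q)_{N}=(-x)^{N}q^{\binom{N}{2}}(q^{1-N}/x;q)_{N}$ to rewrite every $q$-Pochhammer symbol, one finds that the (still one-sidedly infinite) sum over $k$ is a nonterminating very-well-poised ${}_{6}\phi_{5}$ series with base parameter $q^{-2N}/a$, free parameters $bq^{-N}/a,\,cq^{-N}/a,\,dq^{-N}/a$, and argument $q^{N+1}a^{2}/(bcd)$ (of modulus $<1$ once $N$ is large). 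Evaluating it by the displayed ${}_{6}\phi_{5}$ summation and simplifying the resulting infinite products then reproduces exactly the right-hand side of (\ref{Bailey}) at $e=q^{-N}$; together with the first step, this proves (\ref{Bailey}) in general.

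The crux — and the only place where genuine computation is required — is the bookkeeping in the second step: one must propagate the very-well-poised structure accurately through the reversal $n=N-k$, keeping track of the balancing factor $(qa^{2}/(bcde))^{n}$ and of all the negative-index $q$-Pochhammer symbols, and then identify the product coming out of Rogers' summation (in the parameters $q^{-2N}/a$, $bq^{-N}/a$, $cq^{-N}/a$, $dq^{-N}/a$) with the $e=q^{-N}$ specialization of the product in (\ref{Bailey}). That is a finite, mechanical $q$-product calculation, but a lengthy one. If one prefers to avoid analytic continuation altogether, (\ref{Bailey}) can instead be obtained as a confluent limit within the theory of very-well-poised ${}_{8}\phi_{7}$ transformations, the delicate point then being the justification of that limit.
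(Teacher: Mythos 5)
The paper does not actually prove this lemma: it quotes Bailey's ${}_{6}\psi_{6}$ summation and refers the reader to Gasper--Rahman, Sec.\ 5.3, so there is no internal proof to compare against. Your sketch is, in substance, the classical Askey--Ismail argument that Gasper--Rahman present: truncate the bilateral series by specializing a parameter to a negative power of $q$, sum the resulting one-sided series by Rogers' nonterminating very-well-poised ${}_{6}\phi_{5}$, and extend by analyticity in $1/e$ using the points $e=q^{-N}$ accumulating at $1/e=0$. Your bookkeeping claims check out: with $e=q^{-N}$ the sum runs over $n\le N$, and after the reversal $n=N-k$ one does obtain Rogers' series with base parameter $q^{-2N}/a$, upper parameters $bq^{-N}/a$, $cq^{-N}/a$, $dq^{-N}/a$, and argument $q^{N+1}a^{2}/(bcd)$, which has modulus less than $1$ for large $N$; the analyticity of both sides near $1/e=0$ and the identity-theorem step are also sound. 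Two remarks: first, the standard presentation specializes $e=aq^{-N}$ instead, which kills the terms with $n\le -(N+1)$ (through the denominator factor $(q^{N+1};q)_{n}$) and truncates the series from below, so no reversal of order is needed --- a slightly lighter computation than yours, though the two are equivalent by the inversion symmetry of the bilateral series. Second, as you acknowledge, the decisive step --- simplifying the prefactors $(b;q)_{N}$, $(aq/b;q)_{N}$, etc.\ and the ${}_{6}\phi_{5}$ product so as to match the right-hand side of \eqref{Bailey} at $e=q^{-N}$ --- is only asserted, not carried out; for a genuinely self-contained proof that finite product manipulation (and a uniform-convergence justification of holomorphy of the left side at $1/e=0$) would still have to be written down, whereas for the purposes of this paper simply citing the formula, as the author does, is entirely adequate.
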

For the proof of this lemma, see \cite[Sec. 5.3]{Gasper}.
\begin{theorem}\label{B3n}
For any integer $n\ge 0$, we have
\[B_{3}(n)=\sum\limits_{\begin{smallmatrix} d|n+1 \\ d\equiv 1 \, \text{\rm{(mod 3)}} \end{smallmatrix}}{\Big(\frac{n+1}{d}\Big)^2}-\sum\limits_{\begin{smallmatrix}d|n+1 \\ d\equiv 2 \, \text{\rm{(mod 3)}} \end{smallmatrix}}{\Big(\frac{n+1}{d}\Big)^2}.\]
Furthermore, if we write
\[n+1=3^{\alpha}\prod\limits_{p_i \equiv 1 \, \mathrm{(mod \, 3)}}{p_{i}^{\alpha_i}}\prod\limits_{q_{j} \equiv 2 \, \mathrm{(mod \,3)}}{q_{j}^{\beta_j}}\]
as the unique prime factorization of $n+1$ with $\alpha, \alpha_{i}, \beta_{j} \ge 0$, then
\[B_{3}(n)=3^{2\alpha}\prod\limits_{p_i \equiv 1 \, \mathrm{(mod \,3)}}{\frac{p_{i}^{2(\alpha_i+1)}-1}{p_{i}^2-1}}\prod\limits_{q_{j} \equiv 2 \, \mathrm{(mod \, 3)}}{\frac{q_{j}^{2\beta_j+2}+(-1)^{\beta_j}}{q_{j}^2+1}}.\]
\end{theorem}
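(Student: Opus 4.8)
The plan is to mimic the strategy used for $A_3(n)$ in Theorem~\ref{A3n}, but starting from Bailey's ${}_6\psi_6$ formula instead of Ramanujan's ${}_1\psi_1$ summation. First I would set $t=3$ in (\ref{Bt}) and use the factorization (\ref{expan}) to write
\[
\sum_{n=0}^{\infty} B_3(n)q^n = \frac{(q^3;q^3)_{\infty}^{6}}{(q;q^3)_{\infty}^{3}(q^2;q^3)_{\infty}^{3}}.
\]
The goal is to identify the right-hand side with a bilateral Lambert-type series. I would specialize the parameters in (\ref{Bailey}) so that the infinite-product side of Bailey's formula matches (up to an elementary factor and the substitution $q\mapsto q^3$) this quotient. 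A natural choice is $a=q^2$, $q\mapsto q^3$, and then $b,c,d,e$ sent to values like $q, q, q^2, q^2$ (or a limiting version thereof), chosen so that the nine products in the numerator and the products in the denominator collapse to powers of $(q^3;q^3)_\infty$, $(q;q^3)_\infty$, $(q^2;q^3)_\infty$. After taking the appropriate limit in the bilateral sum (some parameters will collide, forcing an application of L'Hospital's rule exactly as in the passage from (\ref{mid1}) to (\ref{mid2})), the ${}_6\psi_6$ series should reduce to a sum of the shape $\sum_{n=-\infty}^{\infty} c_n q^{2n}/(1-q^{3n+1})$ where $c_n$ is a quadratic polynomial in $n$ — plausibly $c_n = (3n+1)^2$ up to normalization, since squares must appear to produce the $(\,\cdot\,)^2$ in the statement.

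Once the $q$-series identity
\[
\sum_{n=0}^{\infty} B_3(n) q^n = \sum_{n=-\infty}^{\infty} \frac{(\text{quadratic in }n)\,q^{2n}}{1-q^{3n+1}}
\]
is established, I would replace $q$ by $q^3$, multiply by a suitable power of $q$ (here $q^2$, to shift exponents into the residue class that tracks $n+1$), and split the bilateral sum at $n=0$, sending $n\mapsto -n-1$ in the negative tail, precisely as in (\ref{A3nfinal}). Expanding each geometric series $1/(1-q^{3m+1})=\sum_{k\ge 0} q^{(3m+1)k}$ turns the right-hand side into a double sum over products $(3m+1)(3k+1)$, $(3m+2)(3k+2)$, etc., weighted by the quadratic coefficients. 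Comparing the coefficient of $q^{3n+3}$ (equivalently, reading off $B_3(n)$) should then yield
\[
B_3(n) = \sum_{\substack{d\mid n+1\\ d\equiv 1\,(3)}} \Big(\tfrac{n+1}{d}\Big)^2 - \sum_{\substack{d\mid n+1\\ d\equiv 2\,(3)}} \Big(\tfrac{n+1}{d}\Big)^2,
\]
after symmetrizing the double sum (interchanging the summation indices $m\leftrightarrow k$) to cancel any unwanted linear-in-$n$ terms, just as the ``second sum vanishes'' step in Theorem~\ref{A3n}.

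For the second assertion — the closed product form — I would recognize the divisor sum as a multiplicative function of $n+1$. Writing $f(N) = \sum_{d\mid N} \chi(d)(N/d)^2$ where $\chi$ is the nonprincipal character mod $3$ (so $\chi(d)=1,-1,0$ according as $d\equiv 1,2,0 \bmod 3$), the function $f$ is a Dirichlet convolution of the completely multiplicative functions $\chi$ and $N\mapsto N^2$, hence multiplicative. It then suffices to evaluate $f$ at prime powers: $f(3^\alpha) = 3^{2\alpha}$ since only $d=3^\alpha$ contributes; $f(p^{\alpha_i}) = \sum_{j=0}^{\alpha_i} p^{2(\alpha_i - j)} = \frac{p^{2(\alpha_i+1)}-1}{p^2-1}$ for $p\equiv 1\bmod 3$; and $f(q^{\beta_j}) = \sum_{j=0}^{\beta_j} (-1)^j q^{2(\beta_j - j)} = \frac{q^{2\beta_j+2}+(-1)^{\beta_j}}{q^2+1}$ for $q\equiv 2\bmod 3$, the last being a finite geometric series with ratio $-1/q^2$. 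Multiplying these local factors over the prime factorization of $n+1$ gives exactly the stated formula.

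The main obstacle I anticipate is the first step: pinning down the exact specialization of the five free parameters in Bailey's ${}_6\psi_6$ formula (and the correct limiting procedure when parameters collide) so that the product side reduces cleanly to $(q^3;q^3)_\infty^6 / \big((q;q^3)_\infty^3 (q^2;q^3)_\infty^3\big)$ and the series side acquires the quadratic weight $(3n+1)^2$. This is a somewhat delicate bookkeeping problem — there are nine products upstairs and many downstairs in (\ref{Bailey}) — and getting the normalizing elementary factor (the analogue of dividing by $1-q$ and by $1-q^2/z$ in Section~2) right is where care is needed. Everything after that identity is routine manipulation of Lambert series and multiplicative functions, parallel to the $A_3(n)$ case.
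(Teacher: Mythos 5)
Your second half (recognizing the divisor sum as the Dirichlet convolution of the nonprincipal character mod $3$ with $N\mapsto N^2$, hence multiplicative, and computing the local factors at $3$, at $p\equiv 1\pmod 3$, and at $q\equiv 2\pmod 3$ as finite geometric series) is correct and is essentially what the paper does. The gap is in the first, crucial step: you never actually produce the specialization of Bailey's ${}_{6}\psi_{6}$ formula that converts $(q^3;q^3)_{\infty}^{6}/\big((q;q^3)_{\infty}^{3}(q^2;q^3)_{\infty}^{3}\big)$ into a Lambert-type series, and you explicitly defer it as ``delicate bookkeeping.'' Moreover, the concrete guess you do offer fails: with base $q^3$, $a=q^2$ and $b,c,d,e\to q,q,q^2,q^2$, the product side of (\ref{Bailey}) reduces to a quotient of the form $(q^5;q^3)_{\infty}(q^2;q^3)_{\infty}^{2}\big/\big((q^4;q^3)_{\infty}^{2}(q;q^3)_{\infty}\big)$ (up to the base factors), which is nowhere near the sixth-power quotient you need. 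Your anticipated shape of the series side is also off: the correct identity does not have the form $\sum_n c_n q^{2n}/(1-q^{3n+1})$ with $c_n$ quadratic in $n$, and no parameter collision or L'Hospital argument arises.

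What actually works (and is the paper's route) is the fully symmetric choice $(a,b,c,d,e,q)\to(q^2,q,q,q,q,q^3)$: then all six products $aq/(bc),\dots,aq/(de)$ become $(q^3;q^3)_\infty$, the denominator contributes $(q^4;q^3)_\infty^4(q^2;q^3)_\infty^4$, and after multiplying both sides by $q(1-q^2)/(1-q)^4$ one gets, with no limiting procedure and no further substitution $q\mapsto q^3$,
\begin{equation*}
\sum_{n=-\infty}^{\infty}\frac{q^{3n+1}(1+q^{3n+1})}{(1-q^{3n+1})^3}
= q\,\frac{(q^3;q^3)_{\infty}^{6}}{(q;q^3)_{\infty}^{3}(q^2;q^3)_{\infty}^{3}}.
\end{equation*}
The square weights then come not from a coefficient $(3n+1)^2$ but from expanding each term via $x(1+x)/(1-x)^3=\sum_{k\ge 1}k^2x^k$ after folding the bilateral sum (send $m\mapsto -m-1$ in the negative tail), which immediately gives $B_3(n)$ as the stated alternating divisor sum with weight $\big((n+1)/d\big)^2$; no symmetrization step is needed. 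So the skeleton of your plan is the right one, but the pivotal ${}_{6}\psi_{6}$ specialization — the only nontrivial point of the proof — is missing, and the parameters you propose would have to be corrected before the rest of the argument can run.
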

\begin{proof}
Setting $t=3$ in (\ref{Bt}) and applying (\ref{expan}), we see that
\begin{equation}\label{B3expan}
\sum\limits_{n=0}^{\infty}{B_{3}(n)q^n}=\frac{(q^3;q^3)_{\infty}^{6}}{(q;q^3)_{\infty}^{3}(q^2;q^3)_{\infty}^{3}}.
\end{equation}

Taking $(a,b,c,d,e,q)\rightarrow (q^2,q,q,q,q,q^3)$ in (\ref{Bailey}), then multiplying both sides by $\frac{q(1-q^2)}{(1-q)^4}$, we obtain
\begin{displaymath}
\sum\limits_{n=-\infty}^{\infty}{\frac{(1+q^{3n+1})q^{3n+1}}{(1-q^{3n+1})^3}}=q\cdot \frac{(q^3;q^3)_{\infty}^{6}}{(q;q^3)_{\infty}^{3}(q^2;q^3)_{\infty}^{3}}.
\end{displaymath}
Combining this with (\ref{B3expan}), we deduce that
\begin{equation}\label{B3key}
\begin{split}
\sum\limits_{n=0}^{\infty}{B_{3}(n)q^{n+1}}&=\sum\limits_{m=0}^{\infty}{\frac{q^{3m+1}(1+q^{3m+1})}{(1-q^{3m+1})^3}}+\sum\limits_{m=-\infty}^{-1}{\frac{q^{3m+1}(1+q^{3m+1})}{(1-q^{3m+1})^3}} \\
&=\sum\limits_{m=0}^{\infty}{\frac{q^{3m+1}(1+q^{3m+1})}{(1-q^{3m+1})^3}}-\sum\limits_{m=0}^{\infty}{\frac{q^{3m+2}(1+q^{3m+2})}{(1-q^{3m+2})^3}},
\end{split}
\end{equation}
here the second equality follows by replacing $m$ by $-m-1$ in the second sum.

It is well known that
\[\frac{1}{1-x}=\sum\limits_{k=0}^{\infty}{x^k}, \quad |x|<1.\]
Applying the operator $x\frac{d}{dx}$ twice to both sides, we get
\[\frac{x(1+x)}{(1-x)^3}=\sum\limits_{k=1}^{\infty}{k^2x^k}, \quad |x|<1.\]
Applying this identity to (\ref{B3key}), we obtain
\[\sum\limits_{n=0}^{\infty}{B_{3}(n)q^{n+1}}= \sum\limits_{m=0}^{\infty}\sum\limits_{k=1}^{\infty}{k^2\big(q^{(3m+1)k}-q^{(3m+2)k}\big)}.\]
The first assertion of this theorem now follows immediately by comparing the coefficients of $q^{n+1}$ on both sides.

Let
\[f(n)=\sum\limits_{\begin{smallmatrix} d|n \\ d\equiv 1 \, \text{\rm{(mod 3)}} \end{smallmatrix}}{\Big(\frac{n}{d}\Big)^2}-\sum\limits_{\begin{smallmatrix}d|n \\ d\equiv 2 \, \text{\rm{(mod 3)}} \end{smallmatrix}}{\Big(\frac{n}{d}\Big)^2}.\]
Suppose $m$ and $n$ are integers which are coprime to each other. It is not hard to see that
\begin{displaymath}
\begin{split}
 f(mn)&=\sum\limits_{\begin{smallmatrix} d|mn \\ d\equiv 1 \, \text{\rm{(mod 3)}} \end{smallmatrix}}{\Big(\frac{mn}{d}\Big)^2}-\sum\limits_{\begin{smallmatrix}d|mn \\ d\equiv 2 \, \text{\rm{(mod 3)}} \end{smallmatrix}}{\Big(\frac{mn}{d}\Big)^2} \\
&=\sum\limits_{\begin{smallmatrix} d_{1}|m \\ d_{1}\equiv 1 \, \text{\rm{(mod 3)}} \end{smallmatrix}}\sum\limits_{\begin{smallmatrix} d_{2}|n \\ d_{2}\equiv 1 \, \text{\rm{(mod 3)}} \end{smallmatrix}}+\sum\limits_{\begin{smallmatrix}d_{1}|m \\ d_{1}\equiv 2 \, \text{\rm{(mod 3)}} \end{smallmatrix}}\sum\limits_{\begin{smallmatrix}d_{2}|n \\ d_{2}\equiv 2 \, \text{\rm{(mod 3)}} \end{smallmatrix}}{\Big(\frac{mn}{d_1d_2}\Big)^2}\\
&\quad -\sum\limits_{\begin{smallmatrix} d_{1}|m \\ d_{1}\equiv 1 \, \text{\rm{(mod 3)}} \end{smallmatrix}}\sum\limits_{\begin{smallmatrix}d_{2}|n \\ d_{2}\equiv 2 \, \text{\rm{(mod 3)}} \end{smallmatrix}}-\sum\limits_{\begin{smallmatrix}d_{1}|m \\ d_{1}\equiv 2 \, \text{\rm{(mod 3)}} \end{smallmatrix}}
\sum\limits_{\begin{smallmatrix} d_{2}|n \\ d_{2}\equiv 1 \, \text{\rm{(mod 3)}} \end{smallmatrix}}{\Big(\frac{mn}{d_1d_2}\Big)^2}\\
&=\Big(\sum\limits_{\begin{smallmatrix} d_{1}|m \\ d_{1}\equiv 1 \, \text{\rm{(mod 3)}} \end{smallmatrix}}{\Big(\frac{m}{d_{1}}\Big)^2}-\sum\limits_{\begin{smallmatrix}d_{1}|m \\ d_{1}\equiv 2 \, \text{\rm{(mod 3)}} \end{smallmatrix}}{\Big(\frac{m}{d_{1}}\Big)^2}\Big) \\
&\quad \cdot \Big(\sum\limits_{\begin{smallmatrix} d_{2}|n \\ d_{2}\equiv 1 \, \text{\rm{(mod 3)}} \end{smallmatrix}}{\Big(\frac{n}{d_{2}}\Big)^2}-\sum\limits_{\begin{smallmatrix}d_{2}|n \\ d_{2}\equiv 2 \, \text{\rm{(mod 3)}} \end{smallmatrix}}{\Big(\frac{n}{d_{2}}\Big)^2}\Big) \\
&=f(m)f(n)
\end{split}
\end{displaymath}
This implies that $f(n)$ is multiplicative. For any prime $p$, from the definition of $f(n)$ and by direct calculations, we obtain that
\begin{displaymath}
f(p^k)=\left\{\begin{array}{ll}
3^{2k} & \textrm{if $p=3$} \\
\frac{p^{2(k+1)}-1}{p^2-1} & \textrm{if $p \equiv 1$ (mod 3)} \\
\frac{p^{2k+2}+(-1)^k}{p^2+1} & \textrm{if $p \equiv 2$ (mod 3)}.
\end{array}\right.
\end{displaymath}
The second assertion of this theorem then follows since $f(n)$ is multiplicative and $B_{3}(n)=f(n+1)$.
\end{proof}

\begin{theorem}\label{B3relation1}
Let $p$ be a prime and $k, n$ be nonnegative integers. \\
(1) If $p \equiv 1$ \text{\rm{(mod 3)}}, we have
\[B_{3}\Big(p^{k}n+p^{k}-1\Big)=\frac{p^{2k}-1}{p^2-1}B_{3}(pn+p-1)-\frac{p^{2k}-p^2}{p^2-1}B_{3}(n).\]
(2) If $p\equiv 2$ \text{\rm{(mod 3)}}, we have
\[B_{3}(p^{k}n+p^k-1)=\frac{p^{2k}-(-1)^{k}}{p^2+1}B_{3}(pn+p-1)+\frac{p^{2k}+(-1)^{k}p^2}{p^2+1}B_{3}(n).\]
\end{theorem}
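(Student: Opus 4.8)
The plan is to deduce everything from the multiplicative function $f$ introduced in the proof of Theorem~\ref{B3n}, which satisfies $B_3(n) = f(n+1)$. First I would fix $p$ and write $n+1 = p^m N$ with $m \ge 0$ and $p \nmid N$. Since
\[p^k n + p^k - 1 = p^k(n+1) - 1 \quad\text{and}\quad pn + p - 1 = p(n+1) - 1,\]
multiplicativity of $f$ together with $\gcd(p^{j}, N) = 1$ gives
\[B_3(n) = f(p^m) f(N), \qquad B_3(pn+p-1) = f(p^{m+1}) f(N), \qquad B_3(p^k n + p^k - 1) = f(p^{k+m}) f(N).\]
In particular the common factor $f(N)$ will cancel from the asserted identities, so, unlike in some of the earlier identities for $A_3$, there is no need to assume $p \nmid n+1$.

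For part~(1), with $p \equiv 1 \pmod 3$, I would substitute $f(p^j) = (p^{2(j+1)}-1)/(p^2-1)$ from the proof of Theorem~\ref{B3n}. After cancelling $f(N)$ and clearing the denominators $p^2-1$, the claim reduces to
\[(p^2-1)\big(p^{2(k+m+1)}-1\big) = \big(p^{2k}-1\big)\big(p^{2(m+2)}-1\big) - \big(p^{2k}-p^2\big)\big(p^{2(m+1)}-1\big).\]
Writing $x = p^2$, both sides expand to $x^{k+m+2} - x^{k+m+1} - x + 1$, which is a finite polynomial check.

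For part~(2), with $p \equiv 2 \pmod 3$, I would instead use $f(p^j) = (p^{2j+2}+(-1)^j)/(p^2+1)$, noting $(-1)^{m+1} = -(-1)^m$. The same cancellation reduces the claim to
\[(p^2+1)\big(p^{2(k+m)+2}+(-1)^{k+m}\big) = \big(p^{2k}-(-1)^k\big)\big(p^{2m+4}-(-1)^m\big) + \big(p^{2k}+(-1)^k p^2\big)\big(p^{2m+2}+(-1)^m\big).\]
Setting $x = p^2$, $\delta = (-1)^k$, $\epsilon = (-1)^m$ (so that $(-1)^{k+m} = \delta\epsilon$), both sides expand to $x^{k+m+2} + x^{k+m+1} + \delta\epsilon\,x + \delta\epsilon$. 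The only thing requiring any care is keeping track of the signs $\delta$ and $\epsilon$ in part~(2); the rest is routine algebra, so I do not anticipate a genuine obstacle.
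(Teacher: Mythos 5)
Your proposal is correct and follows essentially the same route as the paper: write $n+1=p^mN$ with $p\nmid N$, use $B_3(n)=f(n+1)$ and the multiplicativity of $f$ to reduce both sides to multiples of $f(N)$, substitute the prime-power values $f(p^j)$ from Theorem \ref{B3n}, and finish by the polynomial identity check that the paper calls ``direct verification.'' Your explicit expansions in $x=p^2$ (with the signs $\delta,\epsilon$ in part (2)) are exactly the verification the paper leaves to the reader, and they check out.
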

\begin{proof}
Let $n+1=p^{m}N$, where $N$ is not divisible by $p$.

(1) Since $f(n)$ is multiplicative, we have
\begin{equation}\label{B3mid1}
\begin{split}
B_{3}(n)&=f(n+1)=f(p^m)f(N)=\frac{p^{2(m+1)}-1}{p^2-1}f(N),\\
B_{3}(pn+p-1)&=f(p(n+1))=f(p^{m+1})f(N)=\frac{p^{2(m+2)}-1}{p^2-1}f(N),\\
B_{3}(p^kn+p^k-1)&=f(p^k(n+1))=f(p^{k+m})f(N)=\frac{p^{2(m+k+1)}-1}{p^2-1}f(N).
\end{split}
\end{equation}
From those identities in (\ref{B3mid1}), we prove (1) by direct verification.

(2) Similarly, we have
\begin{equation}\label{B3mid2}
\begin{split}
B_{3}(n)&=f(n+1)=f(p^m)f(N)=\frac{p^{2(m+1)}+(-1)^m}{p^2+1}f(N),\\
B_{3}(pn+p-1)&=f(p(n+1))=f(p^{m+1})f(N)=\frac{p^{2(m+2)}+(-1)^{m+1}}{p^2+1}f(N),\\
B_{3}(p^kn+p^k-1)&=f(p^k(n+1))=f(p^{k+m})f(N)=\frac{p^{2(m+k+1)}+(-1)^{m+k}}{p^2+1}f(N).
\end{split}
\end{equation}
From those identities in (\ref{B3mid2}), we prove (2) by direct verification.
\end{proof}
By setting $p=2$ in this theorem we obtain (\ref{B3id3}) immediately. For more examples, by setting $p=5,7$ in this theorem, we obtain for $k,n \ge 0$,
\[B_{3}(5^kn+5^k-1)=\frac{5^{2k}-(-1)^k}{26}B_{3}(5n+4)+\frac{5^{2k}+25(-1)^k}{26}B_{3}(n)\]
and
\[B_{3}(7^kn+7^k-1)=\frac{7^{2k}-1}{48}B_{3}(7n+6)-\frac{7^{2k}-49}{48}B_{3}(n).\]

In some special cases, we can obtain some relations between $B_{3}(p^kn+p^k-1)$ and $B_{3}(n)$.
\begin{theorem}\label{B3relation2}
Let $p$ be a prime and $k, n$ be nonnegative integers. \\
(1) If $p=3$, we have $B_{3}(3^kn+3^k-1)=3^{2k}B_{3}(n)$. \\
(2) If $p\equiv 1$ \text{\rm{(mod 3)}} and $p \nmid n+1$, then
\[B_{3}(p^kn+p^k-1)=\frac{p^{2(k+1)}-1}{p^2-1}B_{3}(n).\]
(3) If $p\equiv 2$ \text{\rm{(mod 3)}} and $p \nmid n+1$, then
\[B_{3}(p^kn+p^k-1) =\frac{p^{2(k+1)}+(-1)^k}{p^2+1}B_{3}(n).\]
\end{theorem}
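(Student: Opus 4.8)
The plan is to read all three identities directly off the machinery already assembled in the proof of Theorem \ref{B3n}: the relation $B_3(n)=f(n+1)$, the multiplicativity of $f$, and the explicit evaluation of $f$ on prime powers, namely $f(3^k)=3^{2k}$, $f(p^k)=(p^{2(k+1)}-1)/(p^2-1)$ for $p\equiv 1\pmod 3$, and $f(p^k)=(p^{2k+2}+(-1)^k)/(p^2+1)$ for $p\equiv 2\pmod 3$.

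For part (1), I would write $n+1=3^m N$ with $3\nmid N$. Then $p^k(n+1)=3^{k+m}N$ is again a factorization into coprime parts, so multiplicativity gives $B_3(3^k n+3^k-1)=f(3^{k+m})f(N)=3^{2(k+m)}f(N)$, whereas $B_3(n)=f(3^m)f(N)=3^{2m}f(N)$. Dividing the first expression by the second yields $B_3(3^k n+3^k-1)=3^{2k}B_3(n)$; note that no hypothesis on $n$ is required here, since one simply peels the full power of $3$ off $n+1$.

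For parts (2) and (3), the hypothesis $p\nmid n+1$ makes $p^k$ and $n+1$ coprime, so $B_3(p^k n+p^k-1)=f(p^k(n+1))=f(p^k)f(n+1)=f(p^k)B_3(n)$; inserting the stated value of $f(p^k)$ in each congruence class of $p$ modulo $3$ finishes the proof. Equivalently, (2) and (3) follow from Theorem \ref{B3relation1} upon observing that $p\nmid n+1$ forces $B_3(pn+p-1)=f(p)B_3(n)$, which equals $(p^2+1)B_3(n)$ when $p\equiv 1\pmod 3$ and $(p^2-1)B_3(n)$ when $p\equiv 2\pmod 3$, and then simplifying the resulting linear combination of $B_3(pn+p-1)$ and $B_3(n)$. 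There is essentially no real obstacle in this argument; the only point needing a moment's care is that part (1) is unconditional, which is precisely why one factors out the $3$-part of $n+1$ at the outset rather than assuming $3\nmid n+1$ as in (2) and (3).
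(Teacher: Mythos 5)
Your proposal is correct and follows essentially the same route as the paper: write $n+1=p^mN$ with $p\nmid N$, invoke $B_3(n)=f(n+1)$ together with the multiplicativity of $f$ and its prime-power values, noting that in (2) and (3) the hypothesis $p\nmid n+1$ forces $m=0$. The alternative derivation of (2) and (3) from Theorem \ref{B3relation1} is a valid but unnecessary extra; the direct argument is exactly what the paper does.
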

\begin{proof}
 Let $n+1=p^mN$, where $N$ is not divisible by $p$.  By Theorem \ref{B3n} and the fact that $f(n)$ is multiplicative, we get
 \[B_{3}(n)=f(p^mN)=f(p^m)f(N)\]
 and
 \[B_{3}(p^kn+p^k-1)=f(p^{k+m}N)=f(p^{k+m})f(N).\]

(1) We have
\[B_{3}(3^kn+3^k-1)=3^{2k+2m}f(N)=3^{2k}B_{3}(n).\]

(2) Since $p \nmid n+1$, we have $m=0$ and
\[B_{3}(p^kn+p^k-1)=f(p^{k})f(N)=\frac{p^{2(k+1)}-1}{p^2-1}B_{3}(n).\]

(3)  Since $p \nmid n+1$, we have $m=0$ and
\[B_{3}(p^kn+p^k-1) =f(p^k)f(N)=\frac{p^{2(k+1)}+(-1)^k}{p^2+1}B_{3}(n).\]
\end{proof}

Note that in this theorem, (1) is (\ref{B3id1}) exactly. By setting $p=2$ and replacing $n$ by $2n$ in (3), we obtain (\ref{B3id2}) at once.
For more examples, by setting $p=5$ (resp. \ $p=7$) and replacing $n$ by $5n+r$ (resp. $7n+r$) in (3) (resp. (2)) we obtain for $k,n \ge 0$,
\[B_{3}\Big(5^{k+1}n+5^k(r+1)-1\Big)=\frac{5^{2k+2}+(-1)^k}{26}B_{3}(5n+r), \quad r \in \{0,1,2,3\}\]
and
\[B_{3}\Big(7^{k+1}n+7^k(r+1)-1\Big)=\frac{7^{2k+2}-1}{48}B_{3}(7n+r), \quad r\in \{0,1,2,3,4,5\}\]
respectively.

\subsection*{Acknowledgements}
The author thanks Professor Chan Heng Huat for showing him the Lambert series representations of  the generating functions for $A_3(n)$ and $B_3(n)$.

\end{document}